\documentclass[12pt]{article}
\usepackage{amsmath,amssymb,amsthm, graphicx} 

\newtheorem{theorem}{Theorem}
\newtheorem{lemma}[theorem]{Lemma}
\newtheorem{corollary}[theorem]{Corollary}
\newtheorem{proposition}{Proposition}
\newtheorem{problem}{Problem}

\title{Monk Algebras and Representability}
\author{Jeremy F.~Alm}
\date{January 2025}

\begin{document}

\maketitle

\begin{abstract}
    In ``Monk Algebras and Ramsey
Theory,'' \emph{J. Log. Algebr. Methods Program.} (2022), Kramer and Maddux prove various representability results in furtherance of the goal of finding the smallest weakly representable but not representable relation algebra. They also pose many open problems.  

In the present paper, we address problems and issues raised by Kramer and Maddux. In particular, we prove that their Proposition 7 does not generalize, and we answer Problem 1.1 in the negative: relation algebra $1311_{1316}$ is not representable. Thus $1311_{1316}$ is a good candidate for the smallest weakly representable but not representable relation algebra. 

Finally, we give the first known finite cyclic group representations for relation algebras $31_{37}$, $32_{65}$,  $1306_{1314}$, and $1314_{1316}$.  
\end{abstract}

\section{Introduction}

In ``Monk Algebras and Ramsey Theory'' \cite{KramerMaddux}, Kramer and Maddux  seek to find the smallest weakly representable but not representable relation algebra. (The smallest known such algebra has 7 atoms, and is given in \cite{AlmHirschMaddux}.)  They consider algebras derived from ``color algebras'' by splitting atoms (in the sense of \cite{AndMadd}).  Let $\mathfrak{E}_n(\{2,3\})$ denote the finite integral symmetric relation algebras with $n$ atoms, where the forbidden diversity cycles are exactly the 1-cycles, i.e., cycles of the form $aaa$, where $a$ is a diversity atom.  Kramer and Maddux call these ``color algebras.''  Given  $\mathfrak{E}_n(\{2,3\})$ and a diversity atom $a$, let

\[
      \mathfrak{E}_n(\{2,3\})\left(\frac{a}{a \ \ a'}\right)
\]
denote the algebra derived from  $\mathfrak{E}_n(\{2,3\})$ by splitting the atom $a$ into two symmetric atoms $a$ and $a'$, and let

\[
      \mathfrak{E}_n(\{2,3\})\left(\frac{a}{r \ \ \breve{r}}\right)
\]
denote the algebra derived from  $\mathfrak{E}_n(\{2,3\})$ by splitting the atom $a$ into an asymmetric pair of atoms $r$ and $\breve{r}$.  (See the discussion in Section 6 of \cite{KramerMaddux}.) We will use this notation to connect our work to that of Kramer and Maddux, but we will also define the algebras under consideration directly via their atoms and diversity cycles.

In this paper, we will avail ourselves of the ``finite field method'' of constructing representations, due to Comer \cite{Comer83}.  Fix $m \in \mathbb{Z}^{+}$.  We search over primes $p \equiv 1 \pmod{m}$, where a desirable $p$ satisfies the following. Let $X_{0} := H \leq \mathbb{F}_{p}^{\times}$ be the unique subgroup of order $(p-1)/m$. Now let $X_{1}, \ldots, X_{m-1}$ be the cosets of $\mathbb{F}_{p}^{\times}/X_{0}$. In particular, as $\mathbb{F}_{p}^{\times}$ is cyclic, we may write $X_{i} = g^{i}X_{0} = \{ g^{am+i} : a \in \mathbb{Z}^{+} \},$ where $g$ is a generator of $\mathbb{F}_{p}^{\times}$. 

For each $0 \leq i \leq m-1$, define $R_{i} := \{(x,y) \in \mathbb{F}_{p} \times \mathbb{F}_{p} : x - y \in X_{i} \}.$ Here, the sets $R_{0}, \ldots, R_{m-1}$ together with $\text{Id} = \{ (u, u) : u \in \mathbb{F}_{p} \}$ are the atoms in our relation algebra.  As noticed by Comer, this construction always gives a relation algebra. 

The diversity atoms are either all symmetric or all asymmetric, as follows:

\begin{lemma}
  For all $0 \leq i \leq m-1$, we have that $X_{i} = -X_{i}$  if and only if $(p-1)/m$ is even. 
\end{lemma}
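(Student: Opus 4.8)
The plan is to reduce the biconditional for an arbitrary coset $X_i$ to the single subgroup $X_0$, and then translate the condition $X_0 = -X_0$ into a congruence that is manifestly equivalent to the parity of $(p-1)/m$. The key structural fact I would exploit is that negation acts on $\mathbb{F}_p^\times$ as multiplication by $-1$, and $-1$ has a clean description as a power of the generator $g$.

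First I would note that $p \equiv 1 \pmod m$ with $m \geq 2$ forces $p$ to be odd, so $-1$ is the unique element of order $2$ in the cyclic group $\mathbb{F}_p^\times = \langle g \rangle$, whence $-1 = g^{(p-1)/2}$. Therefore $-X_i = g^{(p-1)/2} X_i = g^i\bigl(g^{(p-1)/2} X_0\bigr)$, so that $-X_i = X_i = g^i X_0$ if and only if $g^{(p-1)/2} X_0 = X_0$, i.e. if and only if $g^{(p-1)/2} \in X_0$. This membership condition is independent of $i$, which is exactly why the stated equivalence holds uniformly in $i$. Next I would use that $X_0$ is the unique subgroup of order $(p-1)/m$, hence $X_0 = \langle g^m \rangle = \{\, g^{mk} : k \in \mathbb{Z} \,\}$; since $m \mid p-1$ we have $\gcd(m, p-1) = m$, so a power $g^j$ lies in $X_0$ precisely when $m \mid j$. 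Applying this with $j = (p-1)/2$ gives $g^{(p-1)/2} \in X_0 \iff m \mid (p-1)/2$. Finally, writing $p-1 = m \cdot \tfrac{p-1}{m}$, the divisibility $m \mid (p-1)/2$ says exactly that $\tfrac{1}{2}\cdot\tfrac{p-1}{m}$ is an integer, i.e. that $(p-1)/m$ is even. Chaining these equivalences yields $X_i = -X_i \iff (p-1)/m$ is even, for every $i$.

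I do not anticipate a serious obstacle: the argument is entirely elementary once $-1$ is identified with $g^{(p-1)/2}$. The only points demanding a little care are recording that $p$ is odd (so that $g^{(p-1)/2}$ is a genuine nonidentity element and the order-$2$ characterization of $-1$ is valid), and verifying the subgroup-membership criterion $g^j \in X_0 \iff m \mid j$, which hinges on $m \mid p-1$. With these in hand, the translation-invariance observation does all the real work, collapsing the ``for all $i$'' statement to a single condition on $X_0$.
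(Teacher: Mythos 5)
Your proof is correct: the identification $-1 = g^{(p-1)/2}$ (valid since $p$ is odd), the coset-translation reduction of $X_i = -X_i$ to the single condition $g^{(p-1)/2} \in X_0$, and the membership criterion $g^j \in X_0 \iff m \mid j$ together give exactly the equivalence with $(p-1)/m$ being even. The paper states this lemma without proof, treating it as a standard fact, and your argument is precisely the canonical derivation one would supply, with the minor hypotheses ($p$ odd, $m \mid p-1$) correctly flagged.
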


Notice that $(p-1)/m)$ is the order of each of the cosets, and that $X_{i} = -X_{i}$ if and only if $R_i = R_i^{-1}$. 

The following symmetry property turns out to be important:

\begin{lemma}
    For each $i,j,k < m$, 
    
\[
 X_i + X_j \supseteq X_k \text{ \ if and only if \ }X_0 + X_{j-i} \supseteq X_{k-i}
\]    
(where indices are computed modulo $m$).
\end{lemma}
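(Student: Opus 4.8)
The plan is to exploit the multiplicative description of the cosets $X_i = g^i X_0$ together with the fact that multiplication by a fixed nonzero scalar is an additive bijection of $\mathbb{F}_p$. The single observation driving everything is that for nonzero $c \in \mathbb{F}_p$ the map $\varphi_c \colon x \mapsto cx$ satisfies $c(x+y) = cx + cy$, so it carries sumsets to sumsets, $c(A + B) = cA + cB$ for all $A, B \subseteq \mathbb{F}_p$, and being a bijection it preserves the relations $\subseteq$ and $\supseteq$ between subsets of $\mathbb{F}_p$.

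Next I would record how such a scalar permutes the cosets. Since $\mathbb{F}_p^{\times}$ is abelian and $X_0 = H$ is a subgroup, for $c \in X_\ell$ we get $c X_i = X_{\ell + i}$, with indices taken modulo $m$; concretely, writing $c = g^{\ell} h$ with $h \in X_0$ yields $c X_i = g^{\ell} h\, g^i X_0 = g^{\ell + i}(h X_0) = g^{\ell + i} X_0 = X_{\ell + i}$. In other words, $\varphi_c$ acts on the index set $\mathbb{Z}/m\mathbb{Z}$ by translation by $\ell$.

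To prove the lemma, fix $i, j, k$ and take $c = g^{-i}$ (equivalently, any $c \in X_{-i}$). By the previous paragraph, $c X_i = X_0$, $c X_j = X_{j-i}$, and $c X_k = X_{k-i}$. Applying $\varphi_c$ to the inclusion $X_i + X_j \supseteq X_k$ and using $c(X_i + X_j) = c X_i + c X_j$, we obtain $X_0 + X_{j-i} \supseteq X_{k-i}$; conversely, applying $\varphi_{c^{-1}} = \varphi_{g^i}$ reverses the step. Hence the two inclusions are equivalent, which is exactly the claim.

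There is really no hard step: the whole content is the remark that scalar multiplication is simultaneously additive (so it respects $+$ on sets) and bijective (so it respects $\supseteq$), and that it translates the coset indices. The only point requiring a little care is the index bookkeeping modulo $m$, in particular verifying $c X_i = X_0$ for $c \in X_{-i}$; but this is immediate from $g^{-i} g^i = 1$ and $h X_0 = X_0$.
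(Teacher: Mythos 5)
Your proof is correct. The paper in fact states this lemma without proof, so there is no argument to compare against; your reasoning---that multiplication by $c = g^{-i}$ is an additive bijection of $\mathbb{F}_p$ sending each coset $X_\ell$ to $X_{\ell - i}$, hence preserving sumsets and the relation $\supseteq$---is the standard coset-translation argument, and is exactly the idea underlying the fast algorithm of \cite{AlmY} that the paper invokes right after the lemma.
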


This makes checking which cycles are mandatory much more manageable. Furthermore, the algorithm given in \cite{AlmY} makes this checking extremely fast. 

Comer told the author in an email communication dated 3 October 2016 that the calculations in \cite{Comer83} were done by hand.  In a talk given at the 2011 Spring Central Sectional Meeting of the AMS at the University of Iowa, Roger Maddux mentioned that he found representations for $\mathfrak{E}_{n}(\{2,3\})$ for $n = 7$ and $n = 8$ (six and seven colors, respectively) using Comer's technique but on a modern computer.  Maddux did not find a representation for the 8-color algebra $\mathfrak{E}_{9}(\{2,3\})$. 

In \cite{AlmManske}, Manske and the author used the same technique to find representations for all numbers of colors less than 400, except for 8 colors and 13 colors.  A representation for the 8-color algebra using Comer's method was ruled out by checking primes up to the Ramsey bound $R(3,3,3,3,3,3,3,3)$.  (Note that this Ramsey bound grows at least exponentially in the number of colors \cite{Chung}.) Around the same time, Kowalski also found representations for the $n$-color algebras for $n\leq 120$, except for 8 and 13, using Comer's technique, included some non-prime finite fields \cite{Kowalski}. 

Using the  fast algorithm  in \cite{AlmY}, the author found representations for the $n$-color algebras for $401 \leq n \leq 2000$, and ruled out a Comer representation for 13 colors by reducing the exponential Ramsey bound to a polynomial bound for Comer representations using a Fourier-analytic technique from additive number theory \cite{Alm401}. In particular, the $n$-color algebra is not representable via Comer's finite field method on a prime field of order greater than $n^4+5$. (Note that this bound does not apply to non-prime finite fields.)

\section{Examples}

Let's recall Proposition 7 from \cite{KramerMaddux}, as follows.

\begin{proposition}[\cite{KramerMaddux} Proposition 7]
    If $\mathfrak{A}$ is obtained from $\mathfrak{E}_{5}(\{2,3\})$ by splitting all 4 diversity atoms into two parts each, then $\mathfrak{A}\not\in RRA$. Each atom may be split symmetrically or asymmetrically. For example, 

    \[
        \mathfrak{E}_{5}(\{2,3\})\left(\frac{a}{a \ a'}\right)\left(\frac{b}{b \ b'}\right)\left(\frac{c}{c \ c'}\right)\left(\frac{d}{d \ d'}\right)\not\in RRA, 
    \]
but all 16 combinations produce non-representable relation algebras. 
    
\end{proposition}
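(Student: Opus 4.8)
The plan is to argue entirely combinatorially, via Ramsey theory applied to the super-colors, and to treat all $16$ combinations at once. First I would observe that splitting a diversity atom $a$ into two parts $a,a'$ (symmetrically) or $r,\breve r$ (asymmetrically) replaces $a$ by the sum of two atoms whose union $\hat a := a+a'$ (resp.\ $r+\breve r$) is \emph{symmetric}, and that the Andr\'eka--Maddux splitting declares a cycle on the new atoms forbidden exactly when its image cycle under $a',\breve r \mapsto a$ is forbidden in $\mathfrak{E}_5(\{2,3\})$. Since $aaa$ is the (only) forbidden diversity cycle, \emph{every} cycle on the split atoms projecting to $aaa$ is forbidden, so $\hat a\,;\hat a\ \cdot\ \hat a = 0$. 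Hence in any representation of $\mathfrak A$ on a set $U$, each of the four super-colors $\hat a,\hat b,\hat c,\hat d$ is triangle-free, and this conclusion is insensitive to whether each atom was split symmetrically or asymmetrically. Thus a representation of $\mathfrak A$ yields a $4$-edge-coloring of the complete graph on $U$ with no monochromatic triangle. By the infinite Ramsey theorem this forces $U$ to be finite, and by the finite bound $|U|\le R(3,3,3,3)-1$.

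Next I would sharpen the size bound by a neighborhood recursion. For a vertex $v$ and super-color $\hat a$, set $N_{\hat a}(v)=\{u : vu \text{ has super-color } \hat a\}$; any super-color-$\hat a$ edge inside $N_{\hat a}(v)$ would complete a monochromatic $\hat a$-triangle through $v$, so the edges within $N_{\hat a}(v)$ use only the three super-colors $\hat b,\hat c,\hat d$, each triangle-free, giving $|N_{\hat a}(v)|\le R(3,3,3)-1=16$. Iterating, a sub-neighborhood obtained by fixing a second super-color uses only two super-colors and so has size at most $R(3,3)-1=5$, and the extremal configuration of size exactly $5$ is \emph{rigid}: it must be the pentagon $C_5$ in one super-color with its complement $C_5$ in the other. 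This rigidity at the bottom of the recursion is what the splitting will contradict.

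Finally I would bring in the mandatory cycles of the split algebra. Every super-color pattern other than the all-equal one is mandatory, so patterns such as $\hat c\hat c\hat d$ and $\hat c\hat d\hat d$ must be witnessed at the level of the \emph{sub}-colors $c_1,c_2,d_1,d_2$: for each edge of color $c_1$ inside a two-super-color neighborhood there must exist a witness realizing, say, colors $c_2$ and $d_1$ on the other two edges, and similarly for every admissible sub-color pattern. I would show that on a pentagon, where each vertex has exactly two $\hat c$-neighbors and two $\hat d$-neighbors, the sub-colorings simply cannot supply witnesses for all the mandatory split patterns simultaneously. The contradiction propagates back up the recursion, so no finite $U$ (and by the first paragraph no $U$ at all) can carry a representation, giving $\mathfrak A\notin RRA$.

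I expect the main obstacle to be exactly this last step: closing the witness count inside the small, rigid extremal configurations. The upper bounds from $R(3,3,3,3)$, $R(3,3,3)=17$, and $R(3,3)=6$ are comfortable, but verifying that the split's mandatory cycles are genuinely unsatisfiable on the forced bottom-level structures requires a careful (finite) case analysis of how $c_1,c_2,d_1,d_2$ can decorate a pentagon, handled uniformly across the symmetric and asymmetric splits. This is also the step that is most sensitive to the number of colors: with five colors the bottom of the recursion retains three super-colors and loses its rigidity (the $R(3,3,3)$-extremal graphs are far less constrained than $C_5$), which is precisely why one should expect the argument, and the Proposition, not to generalize.
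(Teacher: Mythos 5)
Note first that the paper you are being compared against does not prove this statement at all: Proposition 7 is quoted verbatim from \cite{KramerMaddux} (it is their result), and the present paper's contribution is to show it does \emph{not} generalize. So the comparison must be against the Kramer--Maddux argument. Your opening paragraph is fine: splitting forbids exactly the cycles projecting to $aaa$, so each super-color $\hat a$ is triangle-free, a representation yields a $4$-coloring of a complete graph with no monochromatic triangle, and $|U| \le R(3,3,3,3)-1$. But the recursion that follows has a genuine gap, in two places. First, $R(3,3)-1=5$ is only an \emph{upper bound} on a two-super-color sub-neighborhood; nothing forces any such set to have exactly $5$ points, so the rigid pentagon $C_5$ on which your entire contradiction rests need never occur, and you give no argument for the cases of size $\le 4$. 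Second, and more fundamentally, the ``needs'' of an edge are global to the representation: the witness $k$ for a mandatory cycle on an edge $ij$ may be \emph{any} point of $U$, not a point of the nested neighborhood you are examining. So even if you verified that a pentagon cannot satisfy the mandatory sub-color patterns \emph{internally}, that would contradict nothing --- induced substructures of a representation are under no obligation to meet any needs within themselves. Consequently ``the contradiction propagates back up the recursion'' is unjustified, and the decisive step (which you yourself flag as the main obstacle) is not merely unfinished but aimed at a configuration that is neither forced nor locally constrained in the way you need.

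The actual proof in \cite{KramerMaddux} replaces your local rigidity analysis with a global counting argument, and it is worth seeing why it works uniformly for all $16$ combinations. After splitting, there are $8$ diversity atoms, and a cycle on them is forbidden exactly when it projects to a monochromatic cycle $ccc$. Hence an edge labeled with a sub-atom of color $c$ has $8\cdot 8 - 2\cdot 2 = 60$ mandatory ordered pairs of diversity sub-atoms to witness, and each third point witnesses exactly one ordered pair, so any representation has $|U| \ge 60 + 2 = 62$. On the other hand, merging each split pair gives a $4$-coloring of $K_{|U|}$ with no monochromatic triangle, so $|U| \le R(3,3,3,3)-1 \le 61$, using the upper bound $R(3,3,3,3)\le 62$ of Fettes, Kramer, and Radziszowski --- a contradiction that is blind to whether each split is symmetric or asymmetric. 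This also identifies the correct reason the proposition fails to generalize, which the present paper exploits in its Theorem 2: the per-edge witness count grows only quadratically in the number of colors, while the multicolor Ramsey bound grows exponentially, so for many colors the counting tension evaporates. Your closing speculation (loss of pentagon rigidity at the bottom of the recursion) is therefore not the operative mechanism either.
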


The following two examples show that Proposition 7 from Kramer-Maddux \cite{KramerMaddux} does not generalize to  $\mathfrak{E}_{n}(\{2,3\})$ for all $n>5$. 

\begin{theorem}
    Proposition 7 from Kramer and Maddux \cite{KramerMaddux} does not generalize to $\mathfrak{E}_{n}(\{2,3\})$ for all $n$. In particular, we have the following two results:

    \begin{enumerate}

    \item[(a.)] Consider the 115-color Ramsey algebra $\mathfrak{E}_{116}(\{2,3\})$. Then $\mathfrak{E}_{116}(\{2,3\})\left(\frac{a_i}{a_i \ a_i'}\right)$, where each atom is split symmetrically, is representable.
        \item[(b.)] Consider the 31-color Ramsey algebra $\mathfrak{E}_{32}(\{2,3\})$. Then  $\mathfrak{E}_{32}(\{2,3\})\left(\frac{a_i}{a_i \ \breve{a_i}}\right)$, where each atom is split asymmetrically, is representable.

    \end{enumerate}
\end{theorem}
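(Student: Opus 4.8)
The plan is to prove both (a) and (b) by explicit construction using Comer's finite field method described in the excerpt. Since the goal is to show that a particular split algebra is \emph{representable}, it suffices to exhibit a single prime $p$ and a single Comer-style relation algebra on $\mathbb{F}_p$ whose atom structure matches the split algebra in question. The key observation is that each split algebra has a specific number of diversity atoms: in (a), $\mathfrak{E}_{116}(\{2,3\})$ has $115$ diversity atoms, and splitting each symmetrically doubles this to $230$ symmetric diversity atoms; in (b), $\mathfrak{E}_{32}(\{2,3\})$ has $31$ diversity atoms, and splitting each asymmetrically gives $31$ asymmetric \emph{pairs}, i.e.\ $62$ diversity atoms. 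So I would set $m = 230$ for part (a) and $m = 62$ for part (b), and search over primes $p \equiv 1 \pmod{m}$.

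First I would pin down the cyclic-group structure. For part (a), the split is symmetric, so by Lemma~1 I need $(p-1)/m = (p-1)/230$ to be even; I would restrict the search to primes $p \equiv 1 \pmod{2m}$ to guarantee each $X_i$ satisfies $X_i = -X_i$, matching the requirement that all $230$ diversity atoms be symmetric. For part (b), the split is asymmetric, so I instead want $(p-1)/m = (p-1)/62$ to be \emph{odd}, which by Lemma~1 forces $X_i \neq -X_i$ and yields the $31$ asymmetric pairs $R_i, \breve{R_i}$. In each case the cosets $X_0,\dots,X_{m-1}$ are the atoms, and I would need to verify that the resulting cycle structure forbids exactly the appropriate $1$-cycles (so that the color-algebra structure on the un-split atoms is preserved) while permitting all the cycles mandated by the splitting.

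The central step is to check the forbidden/mandatory cycle conditions. Using Lemma~2, checking whether $X_i + X_j \supseteq X_k$ for all triples reduces to checking $X_0 + X_{j-i} \supseteq X_{k-i}$, a one-parameter family, and I would deploy the fast algorithm of \cite{AlmY} to perform this check across candidate primes. Concretely, I would verify that for the chosen $p$ the sums $X_0 + X_\ell$ cover precisely the cosets required so that (i) the diagonal-type $1$-cycles $aaa$ that must be forbidden in a color algebra remain forbidden on the pre-split atoms, and (ii) every cycle demanded by the split atom structure of $\mathfrak{E}_n(\{2,3\})\left(\frac{a}{a\ a'}\right)$ (resp.\ the asymmetric version) is present. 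This matching of cycle data is what certifies that the Comer algebra on $\mathbb{F}_p$ is isomorphic to the target split algebra, hence that the target is representable.

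The main obstacle I anticipate is \emph{existence of a suitable prime}, not the verification itself. Because the polynomial bound from \cite{Alm401} caps the search for the unsplit $n$-color algebras at $n^4 + 5$, but the split algebras have roughly twice as many atoms and different cycle constraints, there is no a priori guarantee that a small representing prime exists; the search could in principle run long or require a non-prime finite field. I would therefore expect the real content of the proof to be the statement that a specific prime $p$ was found (reported explicitly, e.g.\ with its generator $g$ and the verified coset-sum data) for each of the two cases, with the cycle-condition check delegated to the algorithm of \cite{AlmY}. The fact that representations exist at all for these particular $n$ (where the split doubles the atom count well beyond the problematic $8$- and $13$-color cases) is precisely what makes the counterexample to the generalization of Proposition~7 work, so confirming that the search terminates successfully is the crux.
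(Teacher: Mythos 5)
Your proposal is essentially the paper's own proof: the paper establishes (a) and (b) by exactly the Comer finite-field search you describe, simply reporting the primes it found ($p=751181$ for the symmetric case and $p=33791$ for the asymmetric case, the latter with the doubled coset family $X_0,\dots,X_{61}$ so that $(p-1)/62=545$ is odd, matching your parity analysis via Lemma~1) and leaving the cycle-structure verification implicit in the computational method of Lemma~2 and \cite{AlmY}. Your coset counts $m=230$ and $m=62$ are the correct formalization (the paper's stated ``$m=115$'' and ``$m=31$'' count colors rather than cosets), and the only thing your blind write-up necessarily lacks is the explicit primes, which you correctly identify as the real content of the proof.
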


\begin{proof}

    To prove part (a.), let $p=751181$ and $m=115$. Define $X_0$,\ldots,$X_{114}$ as above. 

    To prove part (b.), let $p=33791$ and $m=31$. Define $X_0$,\ldots,$X_{61}$ as above. 
\end{proof}

The example used in the proof of part (a.) has another interesting property. Relation algebra $32_{65}$ was shown in \cite{AMM} to be representable over a finite set, namely a set of 416,714,805,914 points. This was reduced in \cite{DH} to 63,432,274,896 points, which was later reduced to 8192 by the first and fifth authors (unpublished), and finally to 3432 in \cite{MR3951643}.  The current ``record'' is a representation over 1024 points, found by the author \cite{Alm22}. Yet despite all this attention to $32_{65}$, no cyclic group representation has been given in the literature.  Until now.

\begin{corollary}
    Relation algebra $32_{65}$ has a cyclic group representation over $\mathbb{Z}/p\mathbb{Z}$ for $p=751181$.
\end{corollary}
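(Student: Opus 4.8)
The plan is to reuse verbatim the Comer cosets produced in the proof of Theorem~3(a). Since $p=751181$ yields symmetric atoms (by Lemma~1, as $(p-1)/m$ is even), we already have in hand a collection of symmetric Comer relations $R_0,\ldots,R_{m-1}$ on the additive cyclic group $\mathbb{Z}/p\mathbb{Z}$, each of the form $R_i=\{(x,y):x-y\in X_i\}$. The relation algebra $32_{65}$ has three symmetric diversity atoms together with the identity and a fixed list of mandatory and forbidden diversity cycles. My claim is that $32_{65}$ is recovered from the split color algebra of part~(a) by \emph{merging} atoms: the three diversity atoms of $32_{65}$ are realized as suitable unions of the $R_i$.

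Concretely, I would exhibit a partition of the index set $\{0,\ldots,m-1\}$ into three blocks $B_0,B_1,B_2$, each closed under the coset-negation map $i\mapsto -i$ (so that each union is symmetric), and set $S_j:=\bigcup_{i\in B_j}R_i$ for $j=0,1,2$. The proposed representation of $32_{65}$ is then the proper relation algebra on base set $\mathbb{Z}/p\mathbb{Z}$ with atoms $\text{Id},S_0,S_1,S_2$. Because every $S_j$ is a union of difference-sets over cosets of $\mathbb{F}_p^{\times}$, this is by construction a Cayley structure on the cyclic group $\mathbb{Z}/p\mathbb{Z}$, which is exactly what is meant by a cyclic group representation; this is the feature that has not previously appeared in the literature for $32_{65}$.

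It then remains to verify that $\{\text{Id},S_0,S_1,S_2\}$ is closed under relative products in precisely the pattern dictated by the cycle structure of $32_{65}$. Each such condition reduces to a statement of the form ``$\bigcup_{i\in B_j,\,i'\in B_k}(X_i+X_{i'})$ does (or does not) contain a given coset $X_{i''}$,'' i.e.\ to the same sumset coverings $X_i+X_{i'}\supseteq X_{i''}$ that govern the Comer construction. Here Lemma~2 collapses the work by reducing every triple to a normalized triple with first index $0$, and the algorithm of \cite{AlmY} then certifies all of these containments rapidly for $p=751181$. The main obstacle is twofold: first, producing the correct partition $B_0,B_1,B_2$, since the three blocks must be chosen so that the aggregated sumsets realize exactly the mandatory cycles of $32_{65}$ while omitting exactly its forbidden cycles; and second, carrying out the resulting finite but sizeable verification. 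Once both are in place, the merged structure is a representation of $32_{65}$ over $\mathbb{Z}/p\mathbb{Z}$, completing the proof.
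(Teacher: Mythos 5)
Your strategy is exactly the paper's strategy: keep the Comer cosets $X_0,\ldots,X_{114}$ over $p=751181$ and realize the three diversity atoms of $32_{65}$ as unions of cosets. But your proposal stops precisely where the paper's proof begins. The published proof consists of nothing \emph{but} the explicit partition --- $\rho(b)=X_0$, $\rho(c)=X_1$, $\rho(a)=\bigcup_{i=2}^{114}X_i$ --- and you defer ``producing the correct partition'' and a ``finite but sizeable verification'' as the main obstacles. That is the genuine gap: a blind reader of your proposal has a search problem, not a proof. Moreover, both obstacles dissolve on inspection, and you should have closed the loop. The underlying computation for $p=751181$, $m=115$ establishes that each coset $X_i$ is sum-free and that every non-monochromatic coset cycle $X_iX_jX_k$ is mandatory. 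Given that, the shape of the partition is essentially forced: the blocks for $b$ and $c$ must be singletons, since if a block contained two distinct cosets $X_i,X_j$, the mandatory cycle with indices $i,j,i$ would put a monochromatic triangle inside the merged atom, violating the forbidden cycles $bbb$ and $ccc$. With singleton blocks $\{0\}$ and $\{1\}$ and everything else merged into $a$, the forbidden cycles $bbb$ and $ccc$ fail exactly because $X_0$ and $X_1$ are sum-free, and every mandatory cycle of $32_{65}$ is witnessed by some non-monochromatic coset cycle (e.g., $aaa$ on an edge in $X_k$, $k\geq 2$, is witnessed by $X_2X_3X_k$ with $2\neq 3$). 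So no sumset computation beyond what Theorem 3(a) already supplies is needed.

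Two smaller points. First, your requirement that each block be closed under $i\mapsto -i$ is vacuous here: by Lemma 1, since $(p-1)/m$ is even, every coset already satisfies $X_i=-X_i$, so \emph{any} union of cosets is symmetric --- and indeed you had already observed this a sentence earlier. Second, for the forbidden-cycle checks it is worth noting why coset-level containment suffices: $X_i+X_j$ is itself a union of cosets (together possibly with $\{0\}$), so ``$X_k\not\subseteq X_i+X_j$'' and ``$X_k\cap(X_i+X_j)=\emptyset$'' coincide, which is what makes ``$bbb$ forbidden'' equivalent to sum-freeness of $X_0$ rather than a strictly stronger disjointness condition.
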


\begin{proof}
    As above, define $X_0$,\ldots,$X_{114}$. Then let 

    \begin{align*}
    \rho(a)&=\cup_{i=2}^{114} X_i\\
    \rho(b)&= X_0\\
    \rho(c)&=X_1\\
\end{align*} is a group representation. 
\end{proof}

Similarly, the example in the proof of part (b.) can be used to show that relation algebras $31_{37}$ and $1306_{1314}$ are finitely representable. 

\begin{corollary}
    Relation algebras $31_{37}$ and $1306_{1314}$ have cyclic group representations over $\mathbb{Z}/p\mathbb{Z}$ for $p=33791$. 
\end{corollary}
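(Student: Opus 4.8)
The plan is to recycle the finite-field data from the proof of part (b): fix $p = 33791$ and take the $62$ cosets $X_0, \ldots, X_{61}$ of the order-$545$ subgroup $H \le \mathbb{F}_p^{\times}$. Since $(p-1)/62 = 545$ is odd, Lemma 1 makes every $X_i$ asymmetric, and negation acts on the cosets by the involution $-X_i = X_{i+31}$ (indices mod $62$), so the $62$ cosets form $31$ asymmetric pairs. The structural fact that drives everything is that a sumset $X_i + X_j$ is $H$-invariant and is therefore itself a union of cosets; hence $R_i \,;\, R_j$ is always a union of the $R_k$, and \emph{any} partition of the index set $\{0, \ldots, 61\}$ that is respected by the involution $i \mapsto i + 31$ yields a subalgebra of the Comer algebra whose atoms are the blocks. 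Exactly as in the $32_{65}$ corollary, the task is to choose such a partition so that the induced atom structure is isomorphic to $31_{37}$ (respectively $1306_{1314}$).

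Concretely, I would first read off from the Maddux catalog the atom structure of each target --- how many symmetric diversity atoms, how many asymmetric pairs, and the list of mandatory diversity cycles --- and then build the matching grouping. An asymmetric pair $r, \breve r$ of the target is assigned an involution-swapped block pair, e.g.\ $\rho(r) = X_i$ and $\rho(\breve r) = X_{i+31} = -X_i$; a symmetric atom $a$ is assigned an involution-\emph{closed} union of cosets, the smallest instance being a single fused pair $\rho(a) = X_i \cup X_{i+31}$ and the largest being a ``catch-all'' union of all remaining pairs. Because discarding whole pairs leaves an involution-closed remainder, the catch-all atom is automatically symmetric, so converse is respected by construction and $\rho$ is at once a Boolean and converse homomorphism onto the subalgebra.

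All that then remains is the relative-product check: I must verify that for target atoms $x, y, z$ the containment $\rho(x) \,;\, \rho(y) \supseteq \rho(z)$ holds if and only if $xyz$ is a mandatory cycle of the target. Unwinding the unions, this becomes a family of sumset containments $X_i + X_j \supseteq X_k$, which Lemma 2 collapses to the single-orbit conditions
\[
X_0 + X_{j-i} \supseteq X_{k-i},
\]
and the algorithm of \cite{AlmY} evaluates these rapidly. Matching the \emph{mandatory} cycles is the easy direction, since each required cycle needs only one witnessing sumset containment.

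The hard part will be the \emph{forbidden} cycles. When a catch-all atom absorbs many coset pairs, the product $\rho(x) \,;\, \rho(y)$ sweeps over all cross-sums $X_i + X_j$ with $X_i \subseteq \rho(x)$ and $X_j \subseteq \rho(y)$, and a single stray containment $X_i + X_j \supseteq X_k$ with $X_k \subseteq \rho(z)$ would realize a cycle that $31_{37}$ (or $1306_{1314}$) forbids, destroying the isomorphism. Thus the genuine work is to design the involution-respecting partition so that no forbidden cycle survives the fusion, and then to confirm its absence exhaustively; given the reductions above this is a finite, machine-checkable verification, and exhibiting one grouping that passes it completes both representations.
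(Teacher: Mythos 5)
Your proposal is the paper's approach: same prime $p=33791$, same $62$ cosets of the order-$545$ subgroup, same involution $-X_i = X_{i+31}$ handling converse, and the same fusion template --- and indeed your ``e.g.'' choices instantiate exactly to the paper's witnesses, namely $\rho(r)=X_0$, $\rho(\breve{r})=X_{31}$, $\rho(a)=\bigcup_{i=1}^{30}X_i \cup \bigcup_{i=32}^{61}X_i$ for $31_{37}$, and for $1306_{1314}$ the same $r,\breve{r}$ with the single fused pair $\rho(b)=X_1\cup X_{32}$ carved out of the catch-all. Two caveats, one minor and one substantive. Minor: your claim that \emph{any} involution-respecting partition ``yields a subalgebra of the Comer algebra whose atoms are the blocks'' is false as stated --- unions of blocks need not be closed under relative product (a sumset $X_i+X_j$ with $X_i,X_j$ in two blocks can contain some but not all cosets of a third block), and this closure is precisely what your later mandatory/forbidden cycle check establishes; your final paragraph effectively corrects the overstatement, so it is local rather than fatal. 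Substantive: as written, the proof stops at ``exhibiting one grouping that passes it completes both representations'' without exhibiting one or running the check, and for an existence claim whose proof \emph{is} an explicit machine-verified witness, that last step is the entire content of the paper's proof. Also note that for the catch-all atom, $\rho(x);\rho(y)\supseteq\rho(z)$ requires a witnessing containment $X_i+X_j\supseteq X_k$ for \emph{every} coset $X_k\subseteq\rho(z)$, not one per mandatory cycle as your phrasing suggests. To be complete, state the two assignments above and report the (fast, via Lemma 2 and the algorithm of the cited coset-translation paper) verification that all and only the cycles of $31_{37}$ and $1306_{1314}$ are realized.
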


\begin{proof}
    As above, define $X_0$,\ldots,$X_{61}$.  For $31_{37}$, define 

    \begin{align*}
    \rho(a)&=\bigcup_{i=1}^{30} X_i \ \cup \ \bigcup_{i=32}^{61} X_i\\
    \rho(r)&=X_0\\
    \rho(\breve{r}) &= X_{31}
\end{align*} is a group representation.

  For $1306_{1314}$, define 
    \begin{align*}
    \rho(a)&=\bigcup_{i=2}^{30} X_i \ \cup \ \bigcup_{i=33}^{61} X_i\\
    \rho(b)&= X_1\cup X_{32}\\
    \rho(r)&=X_0\\
    \rho(\breve{r}) &= X_{31}
\end{align*} is a group representation. 
    
    \end{proof}

---------------------------------------------------------------------------

\section{$1308_{1316}$}

Relation algebra $1308_{1316}$ has atoms $1'$, $a$, $b$, $r$, and $\breve{r}$. The forbidden cycles are $aaa$, $bbb$, and $rr\breve{r}$. This algebra is obtained from splitting a diversity atom in $\mathfrak{E}_{4}(\{2,3\})$ and then ``adding back'' the mandatory cycle $rr\breve{r}$:

    \[
        1308_{1316} = \mathfrak{E}_{4}(\{2,3\})\left(\frac{c}{r \ \breve{r}}\right) + rrr. 
    \]
    Problem 1.3 from \cite{KramerMaddux} asks if $1308_{1316}$ is representable. While we have not been able to answer this question, we can provide some information on a subalgebra of $1308_{1316}$.

 Algebra $33_{37}$ is a subalgebra of $1308_{1316}$ formed by replacing atoms $a$ and $b$ with $a+b$; $a+b$ is a flexible atom.  Many representations of $33_{37}$ have been found, and it is probably representable on arbitrarily large finite sets.  Because it has a flexible atom, it is also representable over a countable set. 

Since a representation of $1308_{1316}$ over a set $U$ implies a representation of $33_{37}$ over $U$ as well, we will take a minute to study representations of $33_{37}$. The following result seems interesting in its own right. 

\begin{theorem}
    The set of integers $n\leq 100$ such that $33_{37}$ is representable over $\mathbb{Z}/n\mathbb{Z}$ is $\{29, 38, 39, 41\} \cup [43,  100]$.  
\end{theorem}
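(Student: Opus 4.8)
The plan is to convert representability over $\mathbb{Z}/n\mathbb{Z}$ into a single combinatorial condition and then treat the positive and negative halves separately. Recall that $33_{37}$ has atoms $1'$, the flexible atom $f=a+b$, and the asymmetric pair $r,\breve r$, with $rr\breve r$ the only forbidden diversity cycle. A cyclic group representation is exactly an assignment of symmetric ``difference sets'' to the diversity atoms that partition $\mathbb{Z}/n\mathbb{Z}\setminus\{0\}$. Writing $S$ for the set assigned to $r$ (so $\breve r$ receives $-S$ and $f$ receives $F$), the composition rules $r;r=r+f$, $\,r;\breve r=1'+f$, and the flexibility of $f$ (giving $f;f=1$ and $f;r=f+r+\breve r$) translate into
\begin{align*}
 S\cap(-S)&=\emptyset, & F&=(S-S)\setminus\{0\},\\
 S+S&=S\cup F, & F+F&=\mathbb{Z}/n\mathbb{Z},\\
 F+S&=\mathbb{Z}/n\mathbb{Z}\setminus\{0\},
\end{align*}
where $\{S,-S,F\}$ must partition $\mathbb{Z}/n\mathbb{Z}\setminus\{0\}$. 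Thus a representation exists precisely when such an $S$ exists, and the theorem becomes a finite search.

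First I would record the cheap necessary conditions, which dispatch the small values outright. Setting $s=|S|$ and $\phi=|F|$, the partition gives $2s+\phi=n-1$; the standard bound $|S-S|\ge 2s-1$ yields $\phi\ge 2s-2$, while $S+S=S\cup F$ with $|S+S|\le\binom{s+1}{2}$ yields $\phi\le s(s-1)/2$. Compatibility of these two bounds forces $(s-1)(s-4)\ge 0$, hence $s\ge 4$ and $n=1+2s+\phi\ge 15$, ruling out every $n\le 14$ with no search at all. I would also note that for even $n$ the order-two element $n/2$ cannot lie in $S\cup(-S)$, so it is forced into $F$; this pins down the parity of $\phi$ and trims the candidate sets.

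For the representable values I would exhibit an explicit $S$ in each case and verify the five conditions directly. The flexible atom supplies a great deal of slack: once $S$ and $F=(S-S)\setminus\{0\}$ partition correctly with $S+S\subseteq S\cup F$, the remaining conditions $F+F=\mathbb{Z}/n\mathbb{Z}$ and $F+S=\mathbb{Z}/n\mathbb{Z}\setminus\{0\}$ tend to hold automatically because $F$ is forced to be large. I do not expect a single closed-form family to cover the whole range $[43,100]$ together with the sporadic values $29,38,39,41$; instead these sets are located by computer and then presented (in a table, with code made available), each checked against the displayed conditions, and I would write out a few representative $S$ in full to keep the construction transparent.

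The main obstacle is the negative half: for $n\in\{15,\dots,28\}\cup\{30,\dots,37\}\cup\{40,42\}$ I must certify that \emph{no} admissible $S$ exists, and non-existence cannot be witnessed by a single object. My plan is an exhaustive search over subsets $S$, pruned by (i) the cardinality window $4\le s\le (n+1)/4$ with $\phi=n-1-2s\le s(s-1)/2$ coming from the counting bounds, (ii) the constraint $S\cap(S-S)=\emptyset$, enforced incrementally as elements are added, and (iii) the automorphisms of $\mathbb{Z}/n\mathbb{Z}$: since $uS$ is admissible whenever $S$ is, for every unit $u$, I search only orbit representatives under multiplication by $(\mathbb{Z}/n\mathbb{Z})^{\times}$. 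The delicate points are guaranteeing that the search remains genuinely complete after these reductions and that the symmetry quotient is applied correctly, so that an empty output really certifies non-representability; I would cross-check the borderline cases (for instance $n=40,42$ against $41,43$) using the independent cycle-checking routine of \cite{AlmY}.
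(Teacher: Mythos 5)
Your overall strategy --- reduce cyclic-group representability to a finite search over a single set $S$ (since $F$ is forced to be the complement of $S\cup(-S)$) and certify both halves computationally --- is in the same spirit as the paper's proof, which encodes exactly this labelling problem as a Boolean formula and runs CryptoMiniSAT for $1\le n\le 100$ (citing the $n=29$ representation from the literature). But your translation of the algebra into set conditions contains a genuine error that would make the whole search return the wrong answer. You assert $r;\breve r = 1'+f$, hence $F=(S-S)\setminus\{0\}$. This is false for $33_{37}$: the only forbidden diversity cycle is $rr\breve r$, whose Peircean transforms are $\breve r\le r;r$ and $r\le \breve r;\breve r$, and nothing else. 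The cycle $r\le r;\breve r$ is a Peircean transform of the \emph{mandatory} cycle $r\le r;r$, so $r,\breve r\le r;\breve r$, and since $1'\le r;\breve r$ always and $f$ is flexible, in fact $r;\breve r = 1$. The correct condition is therefore $S-S=\mathbb{Z}/n\mathbb{Z}$, not $(S-S)\setminus\{0\}=F$. (This is visible in the paper's encoding: the need set for $r$-labelled elements is $\mathrm{ALLNEEDS}\setminus\{(2,2)\}$, which \emph{includes} the decompositions $(r,\breve r)$ and $(\breve r,r)$.)

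Worse, your conditions as written are mutually inconsistent for every nonempty $S$: the (correct) requirement $S+S=S\cup F$ forces $S\subseteq S+S$, so some $s_3=s_1+s_2$ with all $s_i\in S$, whence $s_3-s_1=s_2\in (S-S)\cap S$; but $F=(S-S)\setminus\{0\}$ together with the partition $\{S,-S,F\}$ forces $(S-S)\cap S=\emptyset$. So your exhaustive search would find no admissible $S$ for any $n$ and would wrongly ``prove'' non-representability across the board, contradicting the positive half of the theorem. The downstream consequences inherit the error: the bound $\phi\ge 2s-2$ (and hence $s\ge 4$, $n\ge 15$) rests on the false identification of $F$ with $(S-S)\setminus\{0\}$. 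With the corrected system --- $S\cap(-S)=\emptyset$; $\{S,-S,F\}$ a partition of the nonzero elements; $S+S=S\cup F$; $S-S=\mathbb{Z}/n\mathbb{Z}$; $F+F=\mathbb{Z}/n\mathbb{Z}$; $F+S=\mathbb{Z}/n\mathbb{Z}\setminus\{0\}$ --- your plan (unit-orbit symmetry reduction, incremental pruning, explicit witnesses for the representable $n$, exhaustive refutation for the rest) becomes a sound reformulation of what the paper's SAT encoding checks directly, though the analytic pruning of small $n$ is cosmetic since the solver must handle all excluded $n\le 42$ anyway.
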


For example, here is a representation over $\mathbb{Z}/38\mathbb{Z}$
\begin{align*}
    A &= \{1, 5, 6, 8, 9, 14, 16, 17, 18, 19, 20, 21, 22, 24, 29, 30, 32, 33, 37\} \\
    R &= \{ 3, 7, 10, 11, 13, 23, 26, 34, 36 \}\\
    R^{-1} &= \{ 2, 4, 12, 15, 25, 27, 28, 31, 35 \} 
\end{align*}

\begin{proof}
    The representation over $\mathbb{Z}/29\mathbb{Z}$ is presented in \cite{Directed}, and is probably of minimal size.  The find the other representations, and rule out representations for certain $n$, we used the SAT solver CryptoMiniSAT via the python library \texttt{boolexpr}.   

 To that end, we consider the finite cyclic group of order $n$, namely $\mathbb{Z}/n\mathbb{Z}$. For each $x\in (\mathbb{Z}/n\mathbb{Z})^\times$, define four boolean variables:
\begin{itemize}
    \item $\phi_{x,0}$ means that $x$ is labelled with atom $a$
    \item $\phi_{x,1}$ means that $x$ is labelled with atom $r$
    \item $\phi_{x,2}$ means that $x$ is labelled with atom $\breve{r}$
\end{itemize}

We will build a boolean formula $\Phi$ that declares that any satisfying assignment corresponds to a graph coloring that yields a representation.  We will build several sub-formulae, whose conjunction will be the definition of $\Phi$. 

First, we will declare that, for each fixed $x$, exactly one of the $\phi_{x,k}$ is TRUE:

\[
    \Phi_0 = \bigwedge_{x \neq 0} \left [\bigvee_{k=0}^3 \phi_{x,k} \ \wedge \  \bigwedge_{k_\alpha\neq k_\beta} ( \neg\phi_{x,k_\alpha} \vee \neg\phi_{x,k_\beta}) \right ]
\]

Next we say that $a$ is symmetric, while $r$ and $\breve{r}$ are converses. (While it might be natural to use biconditionals here, we will express the formula in parallel to the particular implementation in \texttt{boolexpr}, which inexplicably lacks a built-in biconditional function.) 

\begin{align*}
    \Phi_1 = \bigwedge_{x\neq 0}  [ &(\phi_{x,0} \rightarrow \phi_{-x,0}) \\
                               \wedge \ &(\phi_{x,1} \rightarrow \phi_{-x,3}) \\
                               \wedge \ &(\phi_{x,2} \rightarrow \phi_{-x,2})  ] \\
\end{align*}

Next we declare that all ``needs'' are met. Define the set ALLNEEDS$= \{0,1,2\} \times \{0,1,2\}$. We will define the sets of needs of each type of edge by subtracting forbidden triangles from ALLNEEDS.

So let 
\begin{align*}
     A &= \mathrm{ALLNEEDS}  \\
     R &= \mathrm{ALLNEEDS} \setminus \{(2,2)\} \\
     Rconv &= \mathrm{ALLNEEDS} \setminus \{(1,1)\} \\
    \end{align*}

    Define the formula $\psi_{x,c_1,c_2}$, which declares that  $x$ has its $(c_1,c_2)$ need met, as follows:

    \[
        \psi_{x,c_1,c_2} = \bigvee_{\substack{y,z \neq 0 \\ x=y+z}} \phi_{y,c_1} \wedge \phi_{z,c_2}
    \]

Now we define a formula asserting that all $x$ have all needs met:

\begin{align*}
    \Phi_2 &= \bigwedge_{x\neq 0} \left [\phi_{x,0} \rightarrow \bigwedge_{(c_1, c_2)\in A} \psi_{x,c_1,c_2}\right ] \\
    &\wedge \bigwedge_{x\neq 0} \left [\phi_{x,1} \rightarrow \bigwedge_{(c_1, c_2)\in R} \psi_{x,c_1,c_2}\right ] \\
    &\wedge \bigwedge_{x\neq 0} \left [\phi_{x,2} \rightarrow \bigwedge_{(c_1, c_2)\in Rconv} \psi_{x,c_1,c_2}\right ] \\
\end{align*}

Finally, we define a formula forbidding the forbidden triangles.  Let

\[
    \Phi_3 = \bigwedge_{y+z\neq 0} \left [ \neg (\phi_{y,1} \wedge \phi_{z,1} \wedge \phi_{y+z,2})  \wedge  \neg (\phi_{y,2} \wedge \phi_{z,2} \wedge \phi_{y+z,1}) \right ]
\]

Finally, define $\Phi = \Phi_0 \wedge \Phi_1 \wedge \Phi_2 \wedge \Phi_3 $.  Using the solver CryptoMiniSAT, we verified  that for $1 \leq n \leq 100$,  $\Phi$ is satisfiable for $n \in \{29, 38, 39, 41\} \cup [43,  100]$.

\end{proof}

\section{$1311_{1316}$}
Relation algebra $1311_{1316}$ has atoms $1'$, $a$, $b$, $r$, and $\breve{r}$. The forbidden cycles are $aaa$, $bbb$, and $rrr$.   Because any directed $K_4$ must contain a chain, the number of points in any representation  is strictly bounded above by $R(3,3,4) = 30$ \cite{R433}. Hence we may consider only $n\leq 29$.  

\begin{lemma}
    Relation algebra $1311_{1316}$ is not representable over 28 points, nor over 29 points. 
\end{lemma}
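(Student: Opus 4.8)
The plan is to recast ``representable over an $n$-element set'' as a finite satisfiability problem and to settle the two remaining large cases $n=28$ and $n=29$ with a SAT solver, in the spirit of the $33_{37}$ encoding of Section 3 — but crucially over a \emph{general} $n$-point set rather than a Cayley graph, since we must exclude \emph{all} representations and not merely the cyclic ones. First I would fix $n\in\{28,29\}$ and introduce, for each ordered pair of distinct points $(u,v)$, Boolean variables recording whether the edge $u\to v$ carries the atom $a$, $b$, $r$, or $\breve r$; as $a,b$ are symmetric and $r,\breve r$ converse, this amounts to labelling each unordered pair with $a$, with $b$, or with an \emph{oriented} $r$-edge. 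The clauses then encode: (i) each pair gets exactly one such label; (ii) the converse/symmetry conditions; (iii) no forbidden triangle, i.e.\ no monochromatic $aaa$ or $bbb$ triangle and no directed $rrr$ triangle; and (iv) that every edge has all of its ``needs'' witnessed.

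For (iv) I would read the need sets directly off the forbidden cycles, exactly as in Section 3: computing the relevant compositions shows that an $a$-edge requires a witness for every pair of atoms except $(a,a)$, a $b$-edge for every pair except $(b,b)$, and an $r$-edge for every pair except $(\breve r,\breve r)$ (dually for $\breve r$). Each requirement becomes a wide disjunction $\bigvee_{w}\bigl(\phi_{uw}^{x}\wedge\phi_{wv}^{y}\bigr)$ over the remaining points $w$, the analogue of the formula $\psi$ of Section 3. The conjunction of (i)--(iv) is satisfiable exactly when a representation on $n$ points exists, so the lemma is precisely the assertion that this formula is \textbf{unsatisfiable} for $n=28$ and for $n=29$. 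The conceptual point guiding the proof is that conditions (i)--(iii) \emph{alone} are satisfiable for every $n$ (colour all edges $r$ acyclically), so the entire obstruction to representability lives in the ``all needs met'' clauses (iv): the solver must certify that no triangle-admissible colouring can simultaneously witness every need.

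I expect the main obstacle to be the sheer size of the search at these $n$, which is why $28$ and $29$ are the last and hardest cases below the already-established bound $n\le 29$ coming from $R(3,3,4)=30$. There are on the order of $n^{3}$ need-clauses, each a wide disjunction over a third vertex, and the raw instance carries the full $n!$ vertex symmetry. To make it tractable I would break symmetry by fixing a canonical order on the points and pinning the labels on the neighbourhood of a single vertex, and I would exploit the severe local sparsity forced by the triangle conditions: each of the $a$- and $b$-graphs is triangle-free, so by $R(3,3)=6$ no six points can avoid an $r$-edge, which sharply constrains the admissible colourings and lets one seed the search. Running a SAT solver such as CryptoMiniSAT on the resulting reduced instances for $n=28$ and $n=29$ and obtaining UNSAT in both cases establishes the lemma; together with a separate disposal of the smaller cases $n\le 27$ this yields the paper's claim that $1311_{1316}$ is not representable.
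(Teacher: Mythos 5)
Your route is genuinely different from the paper's, and it has a concrete flaw. The paper proves this lemma with \emph{no computation at all}: it is a short Ramsey-theoretic degree count. If a vertex had $a$-degree $\geq 9$, its $a$-neighborhood either contains an $a$-edge (an $aaa$ triangle) or is colored only in $b$ and $r$, and $R(3,4)=9$ then forces a $b$-triangle or an $r$-$K_4$, which contains a chain (the forbidden $rrr$ configuration); so $a$-degrees, and symmetrically $b$-degrees, are at most $8$. The $r$-out-neighborhood of any vertex contains no $r$-edges (that would form a chain), so it is $2$-colored in $a,b$, and $R(3,3)=6$ caps it at $5$; likewise the $r$-in-degree. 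Total degree is then at most $5+5+8+8=26$, so $n\leq 27$. The entire point of the lemma is to spare the SAT solver the cases $n=28,29$, which are exactly the largest and hardest instances; your proposal reintroduces them as unexecuted computations, so as written it proves nothing until the solver actually returns UNSAT, whereas the paper's argument is self-contained.

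More seriously, your encoding is incorrect: the need sets for the asymmetric atoms are inverted. The Peircean transforms of the forbidden cycle $rrr$ give $r;r\cdot r=0$, $\breve{r};r\cdot r=0$, $r;\breve{r}\cdot r=0$, and $\breve{r};\breve{r}\cdot\breve{r}=0$. Hence an $r$-edge's excluded needs are $(r,r)$, $(r,\breve{r})$, and $(\breve{r},r)$, while $(\breve{r},\breve{r})$ \emph{is} a genuine need (it corresponds to the mandatory cycle $rr\breve{r}$, which is forbidden in $1308_{1316}$ but not in $1311_{1316}$) — this is the paper's $R=\mathrm{ALLNEEDS}\setminus\{(2,2),(2,3),(3,2)\}$, dually for $\breve{r}$. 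You state the exact opposite: excluding only $(\breve{r},\breve{r})$. With your sets, every $r$-edge is required to have an $(r,r)$ witness, which creates precisely a forbidden $rrr$ triangle, so your clauses (iii) and (iv) are jointly contradictory wherever $r$ is nonempty, and your claimed equivalence ``satisfiable iff representable on $n$ points'' fails: an UNSAT result would be meaningless. Your sanity check exhibits the same confusion: coloring all edges $r$ ``acyclically'' (a transitive tournament) does not satisfy (i)--(iii) — it is saturated with transitive triples $i\to k\to j$, $i\to j$, which are exactly the forbidden $rrr$ triangles; the cycle $rrr$ here is the chain, not the directed $3$-cycle (the directed $3$-cycle is $rr\breve{r}$, which is mandatory). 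Fixing the need sets would make your SAT route valid in principle, but the paper's counting argument remains both shorter and verifiable by hand.
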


\begin{proof}
    Consider a supposed representation on $K_{n}$, $n\leq 29$.   Suppose any vertex has $a$-degree at least nine. The graph induced by the $a$-neighborhood of that vertex will either contain an $a$-edge, creating a monochromatic $a$-triangle, or the neighborhood is colored in $b$ and $r$ only. In this latter case, since $R(3,4) = 9$, we get either a  monochromatic $b$-triangle or a  monochromatic $r$-$K_4$, which contains a chain.  Hence we see that the $a$-degree of any vertex is at most 8.  By symmetry, the $b$-degree of any vertex is at most 8 as well.

    Now consider the $r$-out-degree of any vertex $v$.  Notice that if there are any $r$-edges in graph induced by the $r$-out neighborhood of $v$, a chain is formed.  Thus that neighborhood contains only $a$- and $b$- edges. Since $R(3,3)=6$,  we can see that this neighborhood contains at most five vertices.  Thus the $r$-out-degree (and by symmetry, the $r$-in-degree) is at most 5. 

    For any vertex $v$, then, its total degree is at most $5+5+8+8=26$. Hence $n\leq 27$. 
\end{proof}

At this point, we will turn again to the mighty SAT solver. 

\begin{theorem}\label{bigthm}
    Relation algebra $1311_{1316}$ is not representable.
\end{theorem}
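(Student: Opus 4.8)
The plan is to combine the degree bound just established with an exhaustive, computer-verified case analysis. The degree analysis in the proof of the preceding lemma shows that any representation of $1311_{1316}$ lives on a set of at most $27$ points (total degree $\le 5+5+8+8 = 26$), so it suffices to rule out a representation on $K_n$ for each $n$ with $1 \le n \le 27$. I would handle this entire range uniformly with the SAT solver rather than isolating small cases by hand, since the instances near the top of the range are the only genuinely delicate ones.

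First I would set up a Boolean encoding of the existence of a representation on an arbitrary $n$-element vertex set, generalizing the cyclic encoding used above for $33_{37}$ but discarding all group structure. For each ordered pair of distinct vertices $(u,v)$ I introduce variables recording whether the edge $(u,v)$ is colored $a$, $b$, $r$, or $\breve{r}$, with clauses enforcing that exactly one color is chosen. Symmetry clauses assert that $a$ and $b$ are symmetric and that $r,\breve{r}$ are converses, so that the color of $(u,v)$ determines the color of $(v,u)$. Forbidden-triangle clauses rule out monochromatic $a$- and $b$-triangles and directed $r$-triangles, corresponding to the forbidden cycles $aaa$, $bbb$, $rrr$. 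Finally, the ``all needs are met'' clauses assert, for each edge and each consistent pair of colors $(c_1,c_2)$, the existence of a witness vertex $w$ completing the corresponding triangle; these mandatory-cycle conditions are precisely what promotes a mere edge-coloring to an actual representation. The conjunction of all these sub-formulae is satisfiable if and only if $1311_{1316}$ is representable on $n$ points.

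The main obstacle is tractability. Because we cannot assume the representation arises from a cyclic group, the number of color variables grows like $n^2$ and the need-satisfaction clauses become large disjunctions over all candidate witnesses $w$, so the instances for $n$ near $27$ are by far the hardest to decide. Two reductions make this feasible. The degree analysis is extremely rigid at the top of the range: for $n = 27$ every vertex must have $a$-degree and $b$-degree exactly $8$ and $r$-in- and out-degree exactly $5$, and these cardinality constraints can be added to the formula to prune the search space drastically (and similarly sharp bounds hold for $n = 26$). Second, since any representation may be relabeled freely, I would impose symmetry-breaking constraints, fixing a canonical coloring of the edges incident to a chosen base vertex compatible with the degree bounds, so as to collapse the enormous vertex-label symmetry. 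With these reductions in place, CryptoMiniSAT reports the formula unsatisfiable for every $n$ with $1 \le n \le 27$. Together with the degree bound this shows that $1311_{1316}$ has no representation on any finite set, and since the algebra is finite any representation would be finite, completing the proof.
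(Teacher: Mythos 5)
Your proposal follows essentially the same route as the paper: reduce to $n\leq 27$ via the degree-bound lemma, then encode the existence of a representation on $K_n$ as a Boolean formula (exactly-one color per directed edge, symmetry/converse clauses, forbidden-triangle clauses, and witness clauses for all mandatory needs) and have CryptoMiniSAT verify unsatisfiability for all $n\leq 27$. Your added cardinality constraints and symmetry-breaking at a base vertex are sound solver optimizations not present in the paper's encoding, but they do not change the structure of the argument.
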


\begin{proof}
    We will use a SAT solver to rule our representations on $n$ points for all $n\leq 27$. To that end, we consider the complete directed graph on $n$ vertices, with a directed edge going both ways between vertices (since we have a pair of asymmetric atoms). Let our vertex set be $V = \{0, 1, \ldots, n-1\}$. For each directed edge $ij$ define four boolean variables:
\begin{itemize}
    \item $\phi_{i,j,0}$ means that $ij$ is labelled with atom $a$
    \item $\phi_{i,j,1}$ means that $ij$ is labelled with atom $b$
    \item $\phi_{i,j,2}$ means that $ij$ is labelled with atom $r$
    \item $\phi_{i,j,3}$ means that $ij$ is labelled with atom $\breve{r}$
\end{itemize}

We will build a boolean formula $\Phi$ that declares that any satisfying assignment corresponds to a graph coloring that yields a representation.  We will build several sub-formulae, whose conjunction will be the definition of $\Phi$. 

First, we will declare that, for each fixed $ij$, exactly one of the $\phi_{i,j,k}$ is TRUE:

\[
    \Phi_0 = \bigwedge_{i \neq j} \left [\bigvee_{k=0}^3 \phi_{i,j,k} \ \wedge \  \bigwedge_{k_\alpha\neq k_\beta} ( \neg\phi_{i,j,k_\alpha} \vee \neg\phi_{i,j,k_\beta}) \right ]
\]

Next we say that $a$ and $b$ are symmetric, while $r$ and $\breve{r}$ are converses. 

\begin{align*}
    \Phi_1 = \bigwedge_{i \neq j}  [ &(\phi_{i,j,0} \rightarrow \phi_{j,i,0}) \\
                               \wedge \ &(\phi_{i,j,1} \rightarrow \phi_{j,i,1}) \\
                               \wedge \ &(\phi_{i,j,2} \rightarrow \phi_{j,i,3}) \\
                               \wedge \ &(\phi_{i,j,3} \rightarrow \phi_{j,i,2})  ] \\
\end{align*}

Next we declare that all ``needs'' are met. Define the set ALLNEEDS$= \{0,1,2,3\} \times \{0,1,2,3\}$. We will define the sets of needs of each type of edge by subtracting forbidden triangles from ALLNEEDS.

So let 
\begin{align*}
     A &= \mathrm{ALLNEEDS} \setminus \{(0,0)\} \\
     B &= \mathrm{ALLNEEDS} \setminus \{(1,1)\} \\
     R &= \mathrm{ALLNEEDS} \setminus \{(2,2), (2,3), (3,2)\} \\
     Rconv &= \mathrm{ALLNEEDS} \setminus \{(2,3), (3,2), (3,3)\} \\
    \end{align*}

    Define the formula $\psi_{i,j,c_1,c_2}$, which declared that the edge $ij$ has its $(c_1,c_2)$ need met, as follows:

    \[
        \psi_{i,j,c_1,c_2} = \bigvee_{k \not\in\{i,j\}} \phi_{i,k,c_1} \wedge \phi_{k,j,c_2}
    \]

Now we define a formula asserting that all edges have all needs met:

\begin{align*}
    \Phi_2 &= \bigwedge_{i\neq j} \left [\phi_{i,j,0} \rightarrow \bigwedge_{(c_1, c_2)\in A} \psi_{i,j,c_1,c_2}\right ] \\
    &\wedge \bigwedge_{i\neq j} \left [\phi_{i,j,1} \rightarrow \bigwedge_{(c_1, c_2)\in B} \psi_{i,j,c_1,c_2}\right ] \\
    &\wedge \bigwedge_{i\neq j} \left [\phi_{i,j,2} \rightarrow \bigwedge_{(c_1, c_2)\in R} \psi_{i,j,c_1,c_2}\right ] \\
    &\wedge \bigwedge_{i\neq j} \left [\phi_{i,j,3} \rightarrow \bigwedge_{(c_1, c_2)\in Rconv} \psi_{i,j,c_1,c_2}\right ] \\
\end{align*}

Finally, we define a formula forbidding the forbidden triangles.  Let $F = \{(0,0,0), (1,1,1), (2,2,2), (2,3,2), (2,2,3), (3,3,2), (3,2,3), (3,3,3)\}$, where the triple $(x,y,z)$ means $x\cdot y;z = 0$. (Note that $(x,y,z)$ is not the same as the cycle $xyz$; this notation was chosen the make the formula and program easier to parse.) 

Then define
\[
    \Phi_3 = \bigwedge_{i < j < k} \left [\bigwedge_{(c_1, c_2,c_3) \in F} \neg (\phi_{i,j,c_1} \wedge \phi_{i,k,c_2} \wedge \phi_{k,j,c_3}) \right ]
\]

Finally, define $\Phi = \Phi_0 \wedge \Phi_1 \wedge \Phi_2 \wedge \Phi_3 $.  Using the solver CryptoMiniSAT, we verified the unsatisfiability of $\Phi$ for $n \leq 27$. 

\end{proof}

In order to verify that the conditions in the formulas in the previous proof are correct, it may be helpful to use Figure \ref{fig:1311}, where the red edges represent atom $a$, the blue edges atom $b$, and the forward-green edges $r$.  For example, the forbidden cycles in the third row are all equivalent to $rrr$, and correspond in order, left to right, to $(2,2,2), (2,2,3), (2,3,2) \in F$, respectively. (Trying to work ``up to isomorphism'' when coding the forbidden cycles is not worth the effort, and would make it harder to check correctness in any case. Each triangle $ijk$ is considered only once, with the ``bottom'' edge being $ij$, where  $i<j<k$.)   

\begin{figure}
    \centering
   \includegraphics[width=5in]{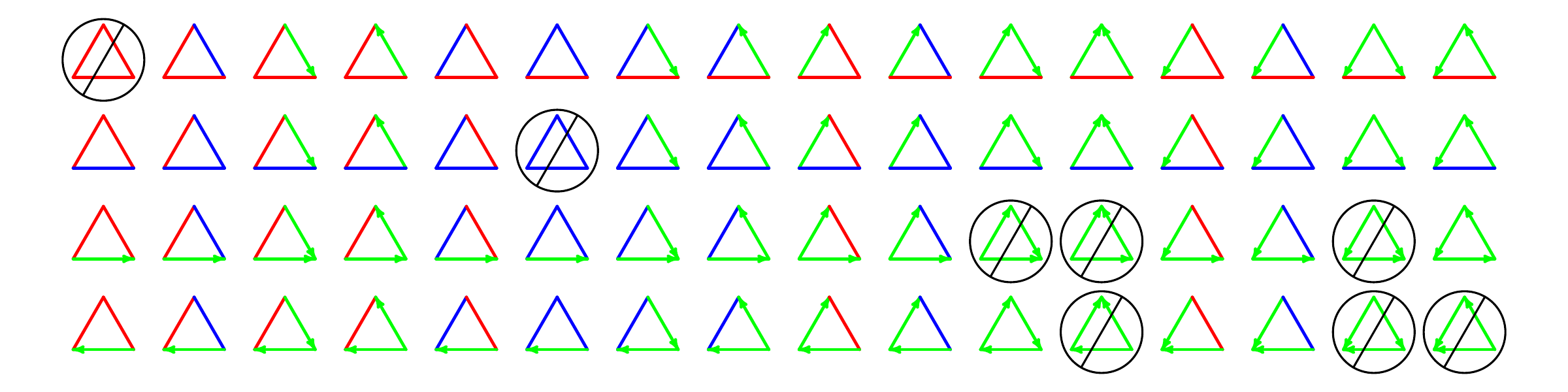}
    \caption{Cycle diagram for $1311_{1316}$}
    \label{fig:1311}
\end{figure}

\section{$1314_{1316}$}

Kramer and Maddux mention that $1314_{1316} = \mathfrak{E}_4(\{2,3\})\left(\frac{c}{r \ \breve{r}}\right) + rrr + rr\breve{r}$ is representable by a generalization of Comer's argument in \cite{Comer84}. Comer's proof produces representations over a countable set.  Here we give an explicit finite representation over 71 points.  

Let $p=71$, and consider the finite field $\mathbb{F}_p$.  Let $m = 10$, and define $X_0$,\ldots,$X_9$ as above. 

Then we define a group representation as follows:
\begin{align*}
    \rho(a) &= X_3 \cup X_8 \\
    \rho(b) &= X_4 \cup X_9 \\
    \rho(r) &= X_0 \cup X_1 \cup X_2 \\
    \rho(\breve{r}) &= X_5 \cup X_6 \cup X_7 
\end{align*}

\section{Summary and open questions}

We have shown in Theorem \ref{bigthm} that Problem 1.1 from \cite{KramerMaddux} has a negative answer: $1311_{1316}$ is not representable. The ``reason'' for non-representability is Ramsey-theoretic in nature, which gives this author hope that $1311_{1316}$ might be weakly representable, since Ramsey-theoretic results do not apply directly to weak representations in the same way that they do for representations.  (Weak representations can be thought of as partial edge-colorings of complete graphs, or equivalently of edge-colorings in which some edges may receive more than one color.) 

\begin{problem}
    Is $1311_{1316}$ weakly representable?
\end{problem}

We might also consider the odd case of the 8-color and 13-color algebras.  The author showed in \cite{AlmManske} and \cite{Alm401} that no finite field representation exist of these algebras. However, the reason for this seems to be that there just aren't enough primes below $8^4 +5$ and $13^4 + 5$, respectively, that are equivalent to $1\pmod{2\cdot8}$ and $1 \pmod{2\cdot 13}$, respectively. This really doesn't seem to give us a reason to believe that these algebras are not representable.  

\begin{problem}
    Is $\mathfrak{E}_9(\{2,3\})$ representable?
\end{problem}

On the other hand, for $n\geq 4$, there are no known representations of the color algebras that are NOT finite field representations.  

\begin{problem}
    Are there any representations of $\mathfrak{E}_n(\{2,3\})$ for $n>4$ that are not finite field representations? In particular, are there any representations over a number of points not equal to a prime or prime power? 
\end{problem}

Finally, the author's argument in \cite{Alm401} uses the so-called ``sum-product phenomenon'' in prime order fields; the bound of $n^4+5$ does not apply to prime-power fields, which do not exhibit this phenomenon.  

\begin{problem}
    Does there exist a representation of an $n$-color algebra over a field of order $p^k$, $k>1$, where $p^k > n^4+5$? If not, then there would have to be a different reason than the one given in \cite{Alm401}. 
\end{problem}

\section{Declarations}

\textbf{Ethical approval}

Not applicable. 

\textbf{Competing interests}

Not applicable. 

\textbf{Author's contribution}

The sole author performed this work on his own. 

\textbf{Availability of data and materials}

Not applicable. 

\textbf{Funding }

The author was not funded by any external agency.


\end{document}